\numberwithin{equation}{section}
\newcommand{\MR}{\textit{MR}}
\newcommand{\R}{\mathds{R}}
\newcommand{\C}{\mathds{C}}
\newcommand{\A}{\mathcal{A}}
\newcommand{\T}{\mathcal{T}}
\renewcommand{\L}{\mathcal{L}}
\renewcommand{\P}{\mathcal{P}}
\renewcommand{\Re}{\operatorname{Re}}
\newcommand{\fra}{\mathfrak{a}}
\renewcommand{\mid}{\, \vert \,}
\DeclarePairedDelimiter\abs{\lvert}{\rvert}
 \DeclarePairedDelimiter\norm{\lVert}{\rVert}
\theoremstyle{plain}
\newtheorem{theorem}{Theorem}[section]
\newtheorem{lemma}[theorem]{Lemma}
\theoremstyle{definition}
\newtheorem{problem}[theorem]{Problem}
\begin{document}
\title{ Evolution Equations governed by Lipschitz Continuous Non-autonomous Forms \footnote {Work  partly supported by DFG (JA 735/8-1)}}
\author{
    Ahmed Sani and Hafida Laasri
}

%\begin{document}

\maketitle

\begin{abstract}\label{abstract}
We prove $L^2$-maximal regularity  of linear non-autonomous evolutionary Cauchy problem
\begin{equation}\label{eq00}\nonumber
\dot{u} (t)+A(t)u(t)=f(t) \hbox{ for }\  \hbox{a.e. t}\in [0,T],\quad u(0)=u_0,
\end{equation}
where the operator  $A(t)$ arises from   a time depending sesquilinear form $\fra(t,.,.)$ on a Hilbert space $H$ with constant  domain $V.$ We prove the maximal regularity in $H$ when these forms are time Lipschitz continuous. We proceed by approximating the problem using the frozen coefficient method  developed in \cite{ELKELA11}, \cite{ELLA13} and \cite{LH}. As a consequence, we obtain an invariance criterion for convex and closed sets of $H.$

\end{abstract}

\bigskip
\noindent
{\bf Key words:} Sesquilinear forms, non-autonomous evolution equations, maximal regularity, convex sets.\medskip

\noindent
\textbf{MSC:} 35K90, 35K50, 35K45, 47D06.

%%%%%%%%%%%%%%%%%%%%%%%%%%%%%%%%%%%%%%%%%%%%%%%%%%%%%%%%%%%%%%%%%%%%%%%%%%%%%%%%%%%%%%%%%%%%%%%%%%%
\section{Introduction}\label{section:introduction}
In this paper we study non-autonomous evolutionary linear Cauchy-problems
\begin{equation}\label{eq0}
\dot{u} (t)+\A(t)u(t)=f(t), \quad u(0)=u_0,
\end{equation}
where the operators $\A(t), \ t\in[0,T],$ arise from  sesquilinear forms on Hilbert spaces. More precisely, throughout this work
 $H$ and $V$ are two separable Hilbert spaces. The   scalar  products and the corresponding norms on $H$ and $V$ will be denoted by  $(. \mid .)$, $(. \mid .)_V$, $\norm{.}$ and $\norm{.}_V$, respectively. We assume that  $V \underset d \hookrightarrow H;$ i.e., $V$ is  densely embedded into $H$ and 
\begin{equation}\label{eq:V_dense_in_H}
    \norm{u} \le c_H \norm u _V \quad (u \in V)
\end{equation}
for some constant $c_H>0.$ \\
Let $V'$ denote  the antidual of $V.$  The duality
between $V'$ and $V$ is denoted by $\langle ., . \rangle$. As usual, we identify $H$ with  $H'.$ It follows that $V\hookrightarrow H\cong H'\hookrightarrow V'$ and so $V$ is identified with a subspace of $V'.$ These embeddings are continuous and
\begin{equation}\label{eq:H_dense_in_V'}
    \norm{f}_{V'} \le c_H \norm f  \quad (f \in V')
\end{equation}
with the same constant $c_H$ as in (\ref{eq:V_dense_in_H}) (see e.g., \cite{Bre11}).\\
For a non-autonomous form
\[
	\fra: [0,T]\times V\times V \to \C
\]
such that   $\fra(t, .,.)$ is sesquilinear for all $t\in[0,T]$, $\fra(.,u,v)$ is measurable for all $u,v\in V,$
\begin{equation*}\label{eq:continuity-nonaut}
	\abs{ \fra(t,u,v) } \le M \norm{u}_V \norm{v}_V \quad (t\in[0,T],u,v\in V)\qquad
\end{equation*}
and
\begin{equation*}\label{eq:Ellipticity-nonaut}
	\Re \fra (t,u,u) +\omega\norm{u}^2\ge \alpha \|u\|^2_V \quad ( t\in [0,T], u\in V)
\end{equation*}
for some  $\alpha>0, M\geq 0$ and $\omega\in \R,$ for each $t\in[0,T]$ we can associate a unique operator $\A(t)\in \L(V,V')$  such that
$\fra(t,u,v)=\langle\A(t)u,v\rangle \hbox{ for all } u,v\in V.$ It is a known fact
 that $-\A(t)$ with domain $V$ generates a holomorphic semigroup $(\mathcal T_t(s))_{s\geq 0}$ on $V'$. Observe that $\|\mathcal A(t)u\|_{V'}\leqslant M \|u\|_V$ for all $u\in V$ and all $t\in [0,T]$. It is worth to mention that the mapping $t\mapsto \mathcal A(t)$ is strongly measurable by the Dunford-Pettis Theorem \cite{ABHN11} since the spaces are assumed to be separable and $t\mapsto \mathcal A(t)$ is weakly measurable. Thus $t\mapsto \mathcal A(t)u$ is Bochner integrable on $[0,T]$ with values in $V'$ for all $u\in V.$

\bigskip
 \par The following well known  maximal regularity  result is due to J. L. Lions.

\begin{theorem}\label{wellposedness in V'}
 Given $f\in L^2(0,T;V^\prime)$ and $u_0\in H,$ there is a unique solution $u \in \MR(V,V'):=L^2(0,T;V)\cap H^1(0,T;V')$ of
\begin{equation}\label{nCP in V'}
\dot{u} (t)+\A(t)u(t)=f(t), \quad u(0)=u_0.
\end{equation}
\end{theorem}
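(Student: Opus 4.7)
The plan is to deduce Theorem~\ref{wellposedness in V'} from J.-L.\ Lions' classical representation theorem for sesquilinear forms that are coercive only on a subspace. I would set up the test space
\[
\mathcal E := \{v\in L^2(0,T;V):\dot v\in L^2(0,T;V'),\ v(T)=0\},\qquad \|v\|_{\mathcal E}^2 := \|v\|_{L^2(0,T;V)}^2 + \|v(0)\|^2,
\]
where the boundary value makes sense because $\mathcal E\subset \MR(V,V')\hookrightarrow C([0,T];H)$. Formal testing of (\ref{nCP in V'}) against $v\in\mathcal E$ followed by integration by parts leads to the variational problem of finding $u\in L^2(0,T;V)$ such that
\[
B(u,v):=\int_0^T\fra(t,u(t),v(t))\,\d t-\int_0^T\langle \dot v(t),u(t)\rangle\,\d t=\int_0^T\langle f(t),v(t)\rangle\,\d t+(u_0\mid v(0))=:F(v)
\]
for every $v\in\mathcal E$. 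Continuity of $B(u,\cdot)$ on $\mathcal E$ and of $F\in\mathcal E'$ is immediate from the $M$-boundedness of $\fra$, Cauchy--Schwarz, and the definition of $\|\cdot\|_{\mathcal E}$.

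The crucial step is weak coercivity of $B$ on $\mathcal E$. After the rescaling $u(t)\mapsto e^{-\lambda t}u(t)$ with $\lambda>\omega$, the shifted form $\fra_\lambda$ satisfies $\Re\fra_\lambda(t,v,v)\ge \alpha\|v\|_V^2$, and for $v\in\mathcal E$ the identity $\Re\langle\dot v,v\rangle=\tfrac12\tfrac{\d}{\d t}\|v\|^2$ combined with $v(T)=0$ yields
\[
-\Re\int_0^T\langle \dot v(t),v(t)\rangle\,\d t=\tfrac12\|v(0)\|^2.
\]
Hence $\Re B(v,v)\ge \alpha\|v\|_{L^2(0,T;V)}^2+\tfrac12\|v(0)\|^2\ge \min(\alpha,\tfrac12)\|v\|_{\mathcal E}^2$. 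Applying Lions' lemma to $B:L^2(0,T;V)\times\mathcal E\to\C$ and the functional $F\in\mathcal E'$ then produces a weak solution $u\in L^2(0,T;V)$ of $B(u,v)=F(v)$ for every $v\in\mathcal E$.

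To finish I would recover the strong formulation. Testing against $v\in C_c^\infty((0,T);V)\subset\mathcal E$ identifies $\dot u+\mathcal A(\cdot)u=f$ in the sense of $V'$-valued distributions on $(0,T)$; since the right-hand side lies in $L^2(0,T;V')$ this forces $\dot u\in L^2(0,T;V')$, hence $u\in \MR(V,V')\hookrightarrow C([0,T];H)$. Integrating by parts in $B(u,v)=F(v)$ now produces the boundary relation $(u(0)-u_0\mid v(0))=0$ for every $v\in\mathcal E$; since $\{v(0):v\in\mathcal E\}\supset V$ is dense in $H$, this pins down $u(0)=u_0$. Uniqueness follows from $\tfrac{\d}{\d t}\|u\|^2=-2\Re\fra(t,u,u)\le 2\omega\|u\|^2$ and Gronwall. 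The main obstacle I anticipate is the weak coercivity step: the norm on $\mathcal E$ must be arranged precisely so that both the boundary term produced by integration by parts and the $(u_0\mid v(0))$-contribution in $F$ are controlled simultaneously, while $\mathcal E$ must remain large enough that density arguments recover (\ref{nCP in V'}) pointwise.
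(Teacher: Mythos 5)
Your argument is correct: it is the classical proof via Lions' representation theorem, and every step (the choice of test space $\mathcal E=\{v\in \MR(V,V'):v(T)=0\}$ with the norm $\norm{v}_{\mathcal E}^2=\norm{v}_{L^2(0,T;V)}^2+\norm{v(0)}^2$, the weak coercivity $\Re B(v,v)\ge\min(\alpha,\tfrac12)\norm{v}_{\mathcal E}^2$ obtained from $v(T)=0$ after the exponential rescaling, the recovery of the equation by testing against $C_c^\infty((0,T);V)$, the identification of the initial value by density of $\{v(0):v\in\mathcal E\}$ in $H$, and the Gronwall uniqueness) is sound; only minor care with complex conjugates in the integration-by-parts identity is needed. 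However, this is genuinely \emph{not} the route the paper takes: the authors explicitly list the representation-theorem proof (and Galerkin, and Tanabe's argument) as known, and their own proof, given as Theorem \ref{alternative-proof}, proceeds by the frozen-coefficient method. There one fixes a subdivision $\Lambda$, replaces $\A(t)$ by its averages $\A_k$ on each subinterval, solves the resulting piecewise-autonomous problem explicitly as a product of analytic semigroups via (\ref{promenade1})--(\ref{prom-sol-no-homogen}), proves the $\Lambda$-uniform a priori bound of Lemma \ref{lem3} by the energy identity (\ref{eq:chain_rule V'}), and extracts a weakly convergent subsequence $u_\Lambda\rightharpoonup u$ in $\MR(V,V')$ whose limit solves (\ref{nCP in V'}); uniqueness is the same Gronwall-type argument you give. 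The trade-off: your proof is shorter and self-contained modulo Lions' lemma, but it produces only the solution; the paper's constructive approximation is the whole point of the article, since the convergence $u_\Lambda\rightharpoonup u$ is reused verbatim to derive the invariance criterion for closed convex sets (Section \ref{s3}) and, after the additional symmetry and Lipschitz hypotheses, the $\MR(V,H)$ maximal regularity of Section \ref{s4}. So your proposal proves the theorem but would not support the rest of the paper.
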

\par Note that $MR(V,V')\underset d \hookrightarrow
C([0,T];H)$ (see \cite[p.106]{Sho97}), so the condition $u(0)=u_0$ in (\ref{nCP in V'}) makes sense and the solution  is  continuous on $[0,T]$ with values in $H.$ \par\noindent  The proof of Theorem \ref{wellposedness in V'} can be given by an  application of Lions’ Representation
Theorem \cite{Lio61} (see also \cite[p. 112]{Sho97} and \cite[Chapter 3]{Tho03}) or by Galerkin's method  \cite[ XVIII
Chapter 3, p. 620]{DL88}. We refer also to an alternative   proof given by Tanabe \cite[Section 5.5]{Tan79}.

\par\noindent In Section \ref{s2}, we give an other  proof  by using the approach of frozen coefficient developed  in \cite{ELKELA11}, \cite{ELLA13} and \cite{LH}, from which we derive the criterion for invariance of convex closed sets established by \cite{ADO12} and also the recent result given by \cite{ADLO13} for Lipschitz continuous forms.
\par\noindent Let
$\Lambda:=(0=\lambda_0<\lambda_1<...<\lambda_{n+1}=T)$ be a
subdivision of $[0,T].$ We approximate (\ref{eq0}) by (\ref{nCP in V'0}), obtained when the generators $\A(t)$ are frozen on the interval $[\lambda_k,\lambda_{k+1}[.$  More precisely, let $\A_\Lambda:\ [0,T]\rightarrow
\mathcal{L}(V,V')$ be given by
\[ \A_\Lambda
(t):=\left\{%
\begin{array}{ll}
    \A_k & \hbox{for } \lambda_k\leq t<\lambda_{k+1},\\
    \A_{n} & \hbox{for } t=T, \\
\end{array}%
\right. \]  with   \[\displaystyle \A_kx
:=\frac{1}{\lambda_{k+1}-\lambda_k}
\int_{\lambda_k}^{\lambda_{k+1}}\A(r)u{\rm  d}r\ \ \  (u\in V,
k=0,1,...,n).\]
Note that the integral in the right hand side makes sense since the mapping $t\mapsto \mathcal A(t)$ is, as mentioned above, strongly Bochner-integrable.\\
We show  (see Theorem \ref{alternative-proof}) that for all $u_0\in H$ and $f\in L^2(0,T;V')$  the non-autonomous problem
 \begin{equation}\label{nCP in V'0}
\dot{u}_\Lambda (t)+\A_\Lambda(t)u_\Lambda(t)=f(t), \quad u_\Lambda(0)=u_0
\end{equation}
 \noindent has a unique solution $u_{\Lambda}\in MR(V,V')$ which converges in $MR(V,V')$ as $\vert
\Lambda\vert\rightarrow 0$ and  $u:=\lim\limits_{ |\Lambda| \to 0}u_{\Lambda}$ solves uniquely  $(\ref{nCP in V'}).$
\par Let $\mathcal C$ be a closed convex subset of the Hilbert space $H$ and let $P: H\rightarrow \mathcal{C}$ be the orthogonal projection onto $\mathcal C.$ As a consequence of Theorem \ref{alternative-proof} we obtain: If $u_0\in\mathcal C, P(V)\subset V$ and
\begin{equation}\label{invariance-homo}\Re\fra(t,Pv,v-Pv)\geq 0\end{equation}
for almost every $t\in [0,T]$ and for all $v\in V,$ then $u(t)\in \mathcal C$ for all $t\in [0,T],$ where $u$ is the solution of (\ref{nCP in V'}) with $f=0.$ In the autonomous case condition
(\ref{invariance-homo}) is also necessary for the invariance of $\mathcal C$, see \cite{Ouh96}. More recently, for $f\neq 0$ the invariance of
$\mathcal C$ under the solution of (\ref{nCP in V'}) was proved by Arendt, Dier and Ouhabaz \cite{ADO12} provided that
\begin{equation}\label{invariance-non-homo}
\Re\fra(t,Pv,v-Pv)\geq  \langle f(t),v-Pv\rangle
\end{equation}
for almost every $t\in [0,T]$ and for all $v\in V.$
\par\bigskip  Theorem \ref{wellposedness in V'} establishes $L^2$-maximal regularity of the Cauchy problem (\ref{nCP in V'}) in $V'$ assuming only that  $t\mapsto \fra(t,u,v)$  is  measurable for all $u,v\in V$. However, in applications to  boundary valued problems, only the part $A(t)$ of $\A(t)$ in $H$ does realize the boundary conditions in question.  Thus one is interested in $L^2$-maximal regularity in $H$:
 \begin{problem}\label{Lions's problem}
If $f\in L^2(0,T; H)$ and $u_0\in V$, does the solution of (\ref{nCP in V'0}) belong to $\MR(V,H):=L^2(0,T;V)\cap H^1(0,T;H)$?
\end{problem}
\noindent This Problem \ref{Lions's problem} is asked (for $u_0=0$) by Lions \cite[p.\ 68]{Lio61} and is, to our knowledge, still open.
 Note that if $\fra$ (or equivalently $\A$) is a step function the answer to Problem  \ref{Lions's problem}  is affirmative. In fact, for $u_0\in V$ and $f\in L^2(0,T; H)$ the solution $u_\Lambda$ of  (\ref{nCP in V'0}) belongs to $\MR(V,H)\cap C([0,T]; V)$ (see Section \ref{s2}). Thus, Problem \ref{Lions's problem} can be reformulated as follows:
\begin{problem}\label{Lions's problem+discrétisé}
If $f\in L^2(0,T; H)$ and $u_0\in V$,  does  the solution  of  (\ref{nCP in V'0}) converge in $\MR(V,H)$ as $\vert
\Lambda\vert\rightarrow 0$ ?
\end{problem}
\noindent For general forms, a positive answer of Problem \ref{Lions's problem} is given under additional regularity assumption (with respect to $t$) on
 $\fra(t,.,.).$ For symmetric forms, Lions proved $L^2$-maximal regularity in $H$ for $u_0=0$ ( respectively for $u_0\in D(A(0))$) provided  $\fra(.,u,v) \in C^1 [0,T]$ (respectively $\fra(.,u,v) \in C^2 [0,T]$ ) for all $u,v \in V,$  \cite[p.~68 and p.~94)]{Lio61}. Moreover, a combination of \cite[Theorem~1.1, p.~129]{Lio61} and \cite[Theorem~5.1, p.~138]{Lio61} shows that
if $\fra(., u, v) \in C^1[0,T]$  for all
$u, v \in V$, then (\ref{nCP in V'0}) has $L^2$-maximal regularity in $H.$  Bardos \cite{Bar71}  gave also a positive answer to Problem (\ref{Lions's problem}) under the assumptions that the domains of both $A(t)^{1/2}$ and
$A(t)^{*1/2}$ coincide with $V$ and that $\A(.)^{1/2}$ is continuously differentiable with values in $\L(V,V')$. We mention also a result of Ouhabaz and Spina \cite{OS10} and Ouhabaz  and Haak \cite{OH14}. They proved $L^2$-maximal regularity for (possibly non-symmetric) forms such that
$\fra(.,u,v) \in C^\alpha[0,T]$ for all $u,v \in V$  and some
$\alpha > \frac 1 2$. The result in  \cite{OS10} concerns the case $u_0=0$ and the one in \cite{OH14} concerns the case $u_0$ in the real-interpolation space
$(H,D(A(0)))_{1/2,2}.$
\bigskip
\par In Section $\ref{s3},$ we are concerned with a recent result obtained in \cite{ADLO13}. Assume that the sesquilinear form $\fra$ can be written as
$\fra(t,u,v) = \fra_1(t,u,v) + \fra_2(t,u,v)$
where $\fra_1$ is symmetric, bounded (i.e $\fra_1(t,u,v)\leqslant M_1 \|u\|\|v\|$, $M_1\geq 0$) and coercive as above and piecewise Lipschitz-continuous on $[0,T]$ with Lipschitz constant $L_1$,
whereas $\fra_2\colon [0,T] \times V \times H \to \C$ satisfies $\abs{\fra_2(t,u,v)}\le M_2 \norm u_V \norm v_H$
and $\fra_2(.,u,v)$ is measurable for all $u\in V$, $v \in H$.
Furthermore, let
$B\colon [0,T] \to \L(H)$ be strongly measurable such that $\norm{B(t)}_{\L(H)} \le \beta_1$ for all $t \in [0,T]$ and
$0 < \beta_0 \le (B(t)g \mid g)_H$ for $g \in H$, $\norm{g}_H=1$, $t \in [0,T].$ Then, the following result is proved in \cite[Corollary 4.3]{ADLO13} :

\begin{theorem}\label{thm: ADOL}
    Let $u_0 \in V$, $f \in L^2(0,T;H)$. Then there exists a unique $MR(V,H)$ satisfying
    \begin{equation*}
        \dot u(t) + B(t)\A(t)u(t) = f(t) \quad \text{a.e.}\quad
                        u(0)=u_0.
    \end{equation*}
    Moreover
    \begin{equation}\label{eq:MR_estimate}
    	\norm u_{\MR(V,H)} \le C \Big[ \norm{u_0}_V + \norm f_{L^2(0,T;H)} \Big],
    \end{equation}
where the constant $C$ depends only on $\beta_0,\beta_1,M_1,M_2,\alpha,T,L_1$ and $\gamma$.
     \end{theorem}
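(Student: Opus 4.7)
The plan is to extend the frozen-coefficient scheme of Section~\ref{s2} by discretizing both $B(\cdot)$ and $\A(\cdot)$. For a subdivision $\Lambda=(\lambda_k)_{k=0}^{n+1}$ of $[0,T]$, let $B_k$, $\A_k$ and $\fra_{j,k}$ denote the interval averages of $B$, $\A$ and $\fra_j$ on $[\lambda_k,\lambda_{k+1})$. On each subinterval the approximate problem $\dot u_\Lambda+B_k\A_k u_\Lambda=f$ is autonomous; since $B_k$ is bounded and coercive on $H$, the part of $B_k\A_k$ in $H$ is sectorial, and De~Simon's theorem yields autonomous $L^2$-maximal regularity in $H$ on each subinterval. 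Chaining the subinterval solutions with matching condition $u_\Lambda(\lambda_k^+)=u_\Lambda(\lambda_k^-)$ and using the fact that symmetry of $\fra_{1,k}$ identifies the real-interpolation trace space with $V$ gives a unique $u_\Lambda\in\MR(V,H)\cap C([0,T];V)$.

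The core of the proof is the uniform $\MR(V,H)$ bound for $u_\Lambda$. I would test the approximate equation in $H$ against $B_\Lambda(t)^{-1}\dot u_\Lambda(t)$, which is admissible since $\beta_0\|g\|^2\le\Re(B(t)g\mid g)$ implies $\|B(t)^{-1}\|_{\L(H)}\le 1/\beta_0$. Using the symmetry of $\fra_1$ and the fact that on $[\lambda_k,\lambda_{k+1})$ the form $\fra_{1,\Lambda}$ is frozen,
\[
 2\Re\fra_{1,\Lambda}(t,u_\Lambda,\dot u_\Lambda)=\tfrac{d}{dt}\fra_{1,k}(u_\Lambda(t),u_\Lambda(t)).
\]
Summing over subintervals telescopes to $\fra_{1,n}(u_\Lambda(T),u_\Lambda(T))-\fra_{1,0}(u_0,u_0)$ plus boundary corrections $(\fra_{1,k}-\fra_{1,k-1})(u_\Lambda(\lambda_k),u_\Lambda(\lambda_k))$, which the piecewise Lipschitz hypothesis bounds by $L_1\cdot\mathcal{O}(|\Lambda|)\cdot\|u_\Lambda(\lambda_k)\|_V^2$; their sum is a Riemann approximation of $L_1\|u_\Lambda\|_{L^2(0,T;V)}^2$. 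Coercivity of $\fra_1$ on the terminal term, coercivity of $B^{-1}$, and Young's inequality on the $\fra_2$ and $f$ contributions (using $|\fra_2(t,u,\dot u)|\le M_2\|u\|_V\|\dot u\|$) yield
\[
\|\dot u_\Lambda\|_{L^2(0,T;H)}^2+\|u_\Lambda(T)\|_V^2\le C\bigl(\|u_0\|_V^2+\|f\|_{L^2(0,T;H)}^2+\|u_\Lambda\|_{L^2(0,T;V)}^2\bigr),
\]
with $C$ independent of $\Lambda$. The $L^2(0,T;V)$ norm of $u_\Lambda$ is bounded via Theorem~\ref{wellposedness in V'} (using $\|f\|_{V'}\le c_H\|f\|_H$), which closes the estimate uniformly in $\Lambda$.

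Passing to the limit is then routine: weak compactness in $\MR(V,H)$ yields a subsequential limit $u$; by Lebesgue differentiation, $B_\Lambda\to B$ and $\A_\Lambda\to\A$ almost everywhere in the strong operator topology, so the equation is preserved in the limit and \eqref{eq:MR_estimate} follows from weak lower semicontinuity of the norm. Uniqueness is obtained by applying the same a priori estimate to the difference of two solutions with $u_0=0$ and $f=0$.

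The main obstacle will be the uniform energy estimate, and specifically the control of the boundary corrections at the discretization points: without the piecewise Lipschitz assumption on $\fra_1$, the telescoped jumps need not be uniformly bounded as $|\Lambda|\to 0$. The $B(t)$ and $\fra_2$ perturbations are essentially lower-order, handled by the ellipticity of $B$ and the $V$--$H$ bound on $\fra_2$; the genuinely delicate point is that the testing identity against $B^{-1}\dot u_\Lambda$ telescopes cleanly only thanks to the symmetry of $\fra_1$ combined with its Lipschitz regularity in $t$.
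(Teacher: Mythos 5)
You should first be aware that the paper does not prove Theorem \ref{thm: ADOL} at all: it is quoted from \cite[Corollary 4.3]{ADLO13}, and the authors' own contribution in this direction (Theorem \ref{(nCP) in H}) is the special case $B=I$, $\fra_2=0$, proved by exactly the frozen-coefficient scheme you propose to extend. So your proposal is not being measured against a proof contained in this paper; it is an attempt to re-derive the general result of \cite{ADLO13} by the method of Section \ref{s4}. Measured against that goal, the telescoping of the symmetric part over the subdivision and the control of the jump terms $(\fra_{1,k}-\fra_{1,k+1})(u_\Lambda(\lambda_{k+1}))$ by the Lipschitz hypothesis are exactly right --- they reproduce the proof of Theorem \ref{(nCP) in H} --- but two steps you treat as routine are precisely where the additional generality bites.

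First, the energy identity is miswritten. Testing $\dot u_\Lambda+B_k\A_k u_\Lambda=f$ against $B_k^{-1}\dot u_\Lambda$ produces the middle term $(B_k\A_k u_\Lambda\mid B_k^{-1}\dot u_\Lambda)=(\A_k u_\Lambda\mid B_k^{*}B_k^{-1}\dot u_\Lambda)$, which is \emph{not} $(\A_k u_\Lambda\mid\dot u_\Lambda)$ unless $B_k$ is self-adjoint, and no such hypothesis is made. The standard fix is to rewrite the equation as $B_k^{-1}\dot u_\Lambda+\A_k u_\Lambda=B_k^{-1}f$ and test against $\dot u_\Lambda$, using $\Re(B^{-1}h\mid h)\ge \beta_0\beta_1^{-2}\norm{h}^2$; as written, the telescoping identity never appears. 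Second, and more seriously, the autonomous building block on each subinterval is asserted rather than proved. You need (i) that the part in $H$ of $B_k\A_k$ --- where $\A_k$ is associated with $\fra_{1,k}+\fra_{2,k}$ and is therefore not self-adjoint, and $B_k$ is merely accretive --- generates a bounded analytic semigroup; (ii) that the trace space $(H,D(B_kA_k))_{1/2,2}$ equals $V$; and (iii) that the resulting maximal regularity constant is uniform in $k$ and $\Lambda$, the analogue of Lemma \ref{indepmax}, without which the Riemann-sum bound on the jump terms collapses as $\abs{\Lambda}\to 0$. None of this follows from De Simon's theorem plus the symmetry of $\fra_{1,k}$; it is the technical heart of the autonomous results in \cite{ADLO13}, obtained there by conjugating with the square root of the self-adjoint operator induced by $\fra_{1,k}$ and treating $\fra_{2,k}$ as a $V$-to-$H$ perturbation. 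Until those ingredients are supplied, your outline establishes only the special case that the paper actually proves.
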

\noindent In the special case where $B=I$ and $\fra=\fra_1$ (or equivalently $\fra_2=0$) we proof that  Problem \ref{Lions's problem+discrétisé} has a positive answer.
\par\noindent We emphasize that our result on approximation  may be applied to  concrete linear evolution equations.  For example, to evolution equation governed by elliptic operator in nondivergence form on a domain $\Omega$ with time depending coefficients
 $$
\left\{
     \begin{aligned}
      \dot {u}(t)&  - \sum_{i,j} \partial_i a_{ij}(t,.) \partial_j u(t)  = f(t)
     \\ u(0) &=u_0 \in H^1(\Omega) .
\end{aligned} \right.
$$
with an appropriate Lipschitz continuity property on the coefficients with respect to $t$ and boundary conditions such as Neumann or non-autonomous Robin boundary conditions.
\subsection*{Acknowledgment}
 The authors are most grateful to Wolfgang Arendt and Omar El-Mennaoui for fruitful discussions on  maximal regularity and invariance criterion for the non-autonomous linear Cauchy problem.
%%%%%%%%%%%%%%%%%%%%%%%%%%%%%%%%%%%%%%%%%%%%%%%%%%%%%%%%%%%%%%%%%%%%%%%%%%%%%%%%%%%%%%%%%%%%%%%%%%%
%%%%%%%%%%%%%%%%%%%%%%%%%%%%%%%%%%%%%%%%%%%%%%%%%%%%%%%%%%%%%%%%%%%%%%%%%%%%%%%%%%%%%%%%%%%%%%%%%%%
\section{Preliminary}\label{s1}

 Consider a  \emph{continuous} and $H$-\emph{elliptic} sesquilinear form $\fra:V \times V \to \C$. This means,
respectively
\begin{equation}\label{eq:a_continuous}
    \abs{\fra(u,v)} \le M \norm u _V \norm v _V \quad \hbox{ for some }  M\geq 0 \hbox{ and all } u,v \in V,
\end{equation}
\begin{equation}\label{eq:H-elliptic}
    \Re \fra(u) + \omega \norm u^2 \ge \alpha \norm u _V^2 \quad \hbox{ for some   } \alpha>0,~\omega\in \R\hbox{ and all } u \in V.
\end{equation}
Here and in the following we shortly write $\fra(u)$ for $\fra(u,u).$ The operator $\A \in \L(V,V')$  associated with  $\fra$  on $V'$ is defined by
\[
% \dual{\A u}{v} = \fra(u,v) \quad (u,v \in V).
\langle \A u, v \rangle = \fra(u,v) \quad (u,v \in V).
\]

\par\noindent Seen as an unbounded operator on $V'$ with domain $D(\A) = V,$ the
operator $- \A$ generates a holomorphic $C_0-$semigroup $\T$ on $V'.$ The semigroup is bounded on a  sector if $\omega
=0$, in which case $\A$ is an isomorphism. Denote by $A$ the part
of $\A$ on $H$; i.e.,\
\begin{align*}
    D(A) := {}& \{ u\in V : \A u \in H \}\\
    A u = {}& \A u.
\end{align*}
It is a known fact that  $-A$ generates a holomorphic $C_0$-semigroup  $T$ on $H$  and $T=\mathcal T_{\mid H}$ is the
restriction of the semigroup generated by $-\A$ to $H.$ Then $A$ is the operator \textit{induced} by $\fra$  on $H.$
 We refer to \cite{Ka},\cite{Ouh05} and  \cite[Chap.\ 2]{Tan79}.
 {\remark\label{remark-rescaling} The sesquilinear form $\fra$ satisfies condition (\ref{eq:H-elliptic}) if and only if the form $a_\omega$ given by
 \[\fra_\omega(u,v):=\fra(u,v)+\omega (u\mid v)\]
 is coercive. Moreover, if $\mathcal T_\omega$ (respectively $\mathcal A_{\omega}$) denotes the semigroup (respectively the operator) associated with $\fra_\omega,$ then $\T_\omega(t)=e^{-\omega t}\mathcal T(t)$ and $\mathcal A_{\omega}=\omega +\mathcal A$ for all $t\geq 0.$ Then it is possible to choose, without loss of generality, $\fra$ coercive (i.e., $\omega=0.$)}

 \par  The following maximal regularity results are well known: If
 $u_0\in H, f\in L^2(a,b;V')$ then the function \[
u(t)=\T(t)u_0+\int_a^t \T(t-r)f(r){\rm d}r\]
belongs to $L^2(a,b;V)\cap H^1(a,b;V')$  and is the unique solution of  the autonomous  initial
value problem
\begin{equation}\label{ACP in V'}
\dot{u} (t)+\A u(t)=f(t),\qquad \mbox{t.a.e on } [a,b]\subset[0,T], \quad u(a)=u_0.
\end{equation}
\par\noindent Recall that the maximal regularity space
\begin{equation}\label{MR(V,V')}\MR (a,b;V,V'):=L^2(a,b;V)\cap H^1(a,b;V')\end{equation}
is continuously embedded in $C([a,b],H)$ and  if  $u\in \MR (a,b;V,V')$ then
the function  $\norm{u(.)}^2$ is absolutely continuous on $[a,b]$ and
\begin{equation}\label{eq:chain_rule V'}
\frac{d}{dt}\norm{ u(.)}^2=2\Re\langle\dot u,u\rangle
\end{equation}
see e.g., \cite[Chapter III, Proposition 1.2]{Sho97} or \cite[Lemma 5.5.1]{Tan79}. For $[a,b]=[0,T]$ we shortly denote $MR(a,b;V,V')$ by $MR(V,V')$ in (\ref{MR(V,V')}).

\par\noindent
Furthermore, if $(f,u_0)\in L^2(a,b;H)\times V$ then the solution $u$  of (\ref{ACP in V'}) belongs to the  maximal regularity space
\begin{equation}\label{MR(H,D)}
MR(a,b ;D(A),H):=L^2(a,b ;D(A))\cap H^1(a,b;H)
\end{equation}
which is equipped with the norm $\|.\|_{MR}$ given for all $u\in MR(a,b;D(A),H)$ by
\begin{equation}\label{MR(H,D)-Norm}
\|u\|_{MR}^2:=\int_a^b\|u(t)\|^2dt +\int_a^b\|\dot u(t)\|^2dt +\int_a^b\|Au(t)\|^2dt.
\end{equation}
\par\noindent The maximal regularity space $MR(a,b; D(A), H)$ is continuously embedded into $C([a,b]; V),$ \cite[Exemple 1, page 577]{DL88}.
If  the form $\fra$ is symmetric,  then for each  $u\in MR(a,b; D(A),H),$
 the function $\fra(u(.))$ belongs to $W^{1,1}(a,b)$ and the following product formula holds
\begin{equation}\label{rule-formula}
\frac{d}{dt}\fra(u(t))=2(Au(t)\mid \dot u(t)) \hbox{  for a.e. } t\in [a,b],
\end{equation}
for the proof  we refer to \cite[Lemma 3.1]{AC10}.
\par\bigskip
 The following lemma gives a locally uniform estimate for the solution of the autonomous problem. This estimate will play an important role in the study of the convergence in Theorem \ref{(nCP) in H}.
{\lemma\label{indepmax}\cite[Theorem 3.1]{AC10} Let  $\fra$ be a continuous and $H$-elliptic sesquilinear form. Assume the form $\fra$ is symmetric.  Let $f\in
L^2(a,b;H)$ and $u_0\in V.$ Let  $u\in MR(a,b; D(A),H)$ be such that
\begin{equation}\label{ACP in H}
\dot{u} (t)+A u(t)=f(t),\qquad t.a.e\  on \ [a,b]\subset[0,T], \quad u(a)=u_0.
\end{equation}
Then  there exists a constant
$c_1>0$ such that
\begin{equation}\label{maxregconst2}
    \sup_{s\in[a,b]}\|u(s)\|_{V}^2\leq c_1\Big[\|u(a)\|_V^2+\|f\|_{L^2(a,b; H)}^2\Big]
\end{equation}
where $c_1=c_1(M, \alpha, \omega, T)>0$ is independent of $f, u_0$ and  $[a,b]\subset[0,T].$ }
\par\bigskip For the sake of completeness, we include here  a simpler proof in the non restrictive case $\omega=0.$

\begin{proof} We use the same technique as in the proof of \cite[Theorem 3.1]{AC10}. For simplicity and according to Remark \ref{remark-rescaling} we may assume without loss of generality that $\omega=0$ in (\ref{eq:H-elliptic}).   For almost every
$t\in[a,b]$ \[ (\dot{u}(t)\mid\dot{u}(t))+
(Au(t)\mid\dot{u}(t))=(f(t)\mid\dot{u}(t)).\]
The rule formula (\ref{rule-formula}) and the Cauchy-Schwartz inequality together
with the Young inequality  applied to the term on the right-hand
side of the above equality imply that, for almost every
$t\in[a,b]$
\[\frac{1}{2}\|\dot{u}(t)\|^2+\frac{1}{2}\frac{d}{dt}\fra(u(t))
\leq\frac{1}{2}\|f(t)\|^2.\] Integrating this
inequality on $[a,t],$ it follows that
\[ \int_a^t \|\dot{u}(s)\|^2ds+ \fra(u(t))\leq \fra(u(a))+ \int_a^t
\|f(s)\|^2ds.\] Thus, by (\ref{eq:a_continuous}) and
(\ref{eq:H-elliptic}),
\begin{equation}\label{eq1}
\int_a^t \|\dot{u}(s)\|^2ds+\alpha\|u(t)\|_V^2\leq
M\|u(a)\|_{V}^2+\|f\|^2_{L^2(a,b;H)}
\end{equation}
for almost every $t\in [0,T].$ It follows that
\begin{equation}\label{eq4}
    \sup_{t\in[a,b]}\|u(t)\|_V^2\leq \frac{1}{\alpha}\Big(M\|u(a)\|_V^2+\|f\|^2_{L^2(a,b;H)}\Big)
\end{equation}
which gives the desired estimate.
\end{proof}
 {\remark Lemma \ref{indepmax}  says that the constant $c_1$ in  (\ref{eq4}) depends only on $M,\alpha,\omega$ and $T,$ but it does not depend on the subinterval $[a,b]$ or on other properties of $\fra$.}
%%%%%%%%%%%%%%%%%%%%%%%%%%%%%%%%%%%%%%%%%%%%%%%%%%%%%%%%%%%%%%%%%%%%%%%%%%%%%%%%%%%%%%%%%%%%%%%%%%%%%%%%%%%%%ù
%               Integrability
%%%%%%%%%%%%%%%%%%%%%%%%%%%%%%%%%%%%%%%%%%%%%%%%%%%%%%%%%%%%%%%%%%%%%%%%%%%%%%%%%%%%%%%%%%%%%%%%%%%%%%%%%%%%%%%%%ùù

\section{Well-posedness in $V'$}\label{s2}
Let $H,V$ be the Hilbert spaces explained  in the previous sections. Let $T>0$ and let
\[
	\fra: [0,T]\times V\times V \to \C
\]
be a \textit{non-autonomous form}, i.e., $\fra(t, .,.)$ is sesquilinear for all $t\in[0,T]$, $\fra(.,u,v)$ is measurable for all $u,v\in V,$
\begin{equation}\label{eq:continuity-nonaut}
	\abs{ \fra(t,u,v) } \le M \norm{u}_V \norm{v}_V \quad (t\in[0,T],u,v\in V)\qquad
\end{equation}
and
\begin{equation}\label{eq:Ellipticity-nonaut}
	\Re \fra (t,u,u) +\omega\norm{u}\ge \alpha \|u\|^2_V \quad ( t\in [0,T], u\in V)
\end{equation}
for some  $\alpha>0, M\geq 0$ and $\omega\in \R.$
 \\\par\noindent  We recall that, for all $t\in[0,T]$ we denote by $\A(t)\in \L(V,V')$  the operator associated with the form $\fra(t,.,.)$ in $V'$  and by $\T_t$ the analytic $C_0$-semigroup generated by $-\A(t)$ on $V'.$  Consider the non-autonomous Cauchy problem
 \begin{equation}\label{nCP in V'2}
\dot{u} (t)+\A(t) u(t)=f(t),\quad \hbox{ for a.e }  t\in [0,T], \quad u(0)=u_0.
\end{equation}
In this section, we are interested in the well-posedness of (\ref{nCP in V'2}) in $V'$ with $L^2$-maximal regularity.
 The case where  $\fra$ is independent on
$t$ is described in the previous section.
 \par\noindent The case where $\fra$ is a step function is also easy to
describe. In fact, let $\Lambda=(0=\lambda_0<\lambda_1<...<\lambda_{n+1}=T)$ be
a subdivision of $[0,T].$ Let \[\fra_k:V \times V \to \C\ \ \hbox{ for } k=0,1,...,n\] a finite family of continuous and $H$-elliptic forms.
The associated operators are denoted by $\A_k\in \L(V,V').$ Let $\T_k$ denote the $C_0-$semigroup generated by $-\A_k$ on $V'$ for all $k=0,1...n.$  The function
\begin{equation}\label{formlambda}\fra_\Lambda:[0,T]\times V \times V \to \C\end{equation}
defined by   $\fra_\Lambda(t;u,v):=\fra_k(u,v)$ for $\lambda_k\leq t<\lambda_{k+1}$ and $\fra_\Lambda(T;u,v):=\fra_{n}(u,v),$ is strongly  measurable on $[0,T]$. Let \[\A_\Lambda:[0,T]\to \L(V,V')\] be given by $\A_\Lambda
(t):=\A_k$ for  $\lambda_k\leq t<\lambda_{k+1},\ k=0,1,...,n,$ and $\A_\Lambda (T):=\A_{n}.$   For each subinterval
$[a,b]\subset [0,T]$ such that
 \[\lambda_{m-1}\leq a<\lambda_m<...<\lambda_{l-1}\leq b<\lambda_{l}\]
 we define the operators $\P_\Lambda (a,b)\in \mathcal{L}(V')$ by
  \begin{equation}\label{promenade1}\P_\Lambda (a,b):= \T_{l-1}(b-\lambda_{l-1})
 \T_{l-2}(\lambda_{l-1}-\lambda_{l-2})...\T_{m}(\lambda_{m+1}-\lambda_{m})\T_{m-1}(\lambda_{m}-a),
  \end{equation}
and for  $\lambda_{l-1}\leq a\leq b<\lambda_{l}$ by
\begin{equation}\label{promenade2}\P_\Lambda (a,b):= \T_{l-1}(b-a).\end{equation}
 It is easy to see, that for all $u_0\in H$ and $f\in L^2(a,b, V')$ the function
 \begin{equation}\label{prom-sol-no-homogen} u_\Lambda(t)=\P_\Lambda(a,t)u_0+\int_a^t\P_\Lambda(r,t)f(r){\rm d}r\end{equation}
belongs to $\MR(a,b;V,V')$ and is the unique solution of  the initial
value problem
\begin{equation*}
\dot{u}_\Lambda(t)+\A_{\Lambda}(t)u_\Lambda(t)=f(t),\quad \hbox{ for a.e }  t\in [a,b]\subset[0,T], \quad u_\Lambda(a)=u_0.
\end{equation*}
The product given by (\ref{promenade1})-(\ref{promenade2}) and also
the existence of a limit of this  product as $|\Lambda|$ converges
to $0$ uniformly on $[a,b]\subset [0,T],$ was studied in
\cite{ELKELA11},\cite{LH} and \cite{ELLA13}.
This leads to a theory of integral product,
comparable to that of the classical Riemann integral. The notion
of product integral has been introduced by V. Volterra at the
end of 19th century. We refer to A.
Slav\'ik \cite{S} and the references therein for a
discussion on the work of Volterra and for more details  on
product integration theory.
\par\bigskip Consider now the general case where $\fra: \ [0,T]\times V\times V\rightarrow
\C$ is a  non-autonomous form and let $\A(t)\in \L(V,V')$ be the associated operator with $\fra(t,.,.)$ on $V'.$ We want to
approximate $\fra$ and $\A$ by step functions. Let  $\Lambda:=(0=\lambda_0<\lambda_1<...<\lambda_{n+1}=T)$  be a subdivision of
$[0,T]$ and $a_\Lambda:\ [0,T]\times V\times V\rightarrow \C $ and $\A_\Lambda:\ [0,T]\rightarrow \mathcal{L}(V,V')$ be as above
  where $\A_k$ are associated with the sesquilinear forms
\begin{equation}\label{eq:form-moyen integrale}
\begin{aligned}
\ \fra_k(u,v):=\frac{1}{\lambda_{k+1}-\lambda_k}
\int_{\lambda_k}^{\lambda_{k+1}}&\fra(r;u,v){\rm  d}r\ \ \  \\& \hbox{ for } u,v\in V, \
k=0,1,...,n.
\end{aligned}
\end{equation}
Note that $\fra_k$ satisfies (\ref{eq:continuity-nonaut}) and (\ref{eq:Ellipticity-nonaut}), $k=0,1,...n$, we then have for all $u\in V$
\begin{equation}\label{eq:op-moyen integrale}
 \A_ku :=\frac{1}{\lambda_{k+1}-\lambda_k}
\int_{\lambda_k}^{\lambda_{k+1}}\A(r)u{\rm  d}r.
\end{equation}
\par\noindent Let $u_0\in H$ and $f\in L^2(0,T;V')$ and let $u_\Lambda\in \MR(V,V')$ denote  the unique solution of
\begin{equation}\label{problem-discrétisé}
  \dot{u}_\Lambda(t)+\mathcal A_\Lambda(t)u_\Lambda(t)=f(t),\quad \hbox{ for a.e }  t\in [0,T],\
 \ \ \ \ u_\Lambda(0)=u_0 \
\end{equation}
Recall that $u_\Lambda$ is given explicitly by (\ref{promenade1})-(\ref{prom-sol-no-homogen}). \par\noindent  For simplicity and according to Remark \ref{remark-rescaling}, we may assume without loss of generality that  $\omega=0$ in (\ref{eq:Ellipticity-nonaut}). In fact, let $u_\Lambda\in MR(V,V')$ and $v_\Lambda(t):=e^{-wt}u_\Lambda(t).$ Then $u_\Lambda$ satisfies (\ref{problem-discrétisé}) if and only if $v_\Lambda$ satisfies
\begin{equation}\label{problem-discrétisétranslate}
  \dot{v}_\Lambda(t)+(\omega+\mathcal A_\Lambda(t))v_\Lambda(t)=e^{-wt}f(t) \ \ \  t{\rm
-a.e.}  \hbox{ on} \ [0,T],\
 \ \ \ \ v_\Lambda(0)=u_0 \
\end{equation}
 In the sequel, $\omega=0$ will be our assumption.
\begin{lemma}\label{lem3} Let $u_0\in H$ and $f\in L^2(0,T;V').$ Let $u_\Lambda\in \MR(V,V')$ be the  solution  of
(\ref{problem-discrétisé}). Then there exists a constant $c_2>0$ independent of $f, u_0$ and $\Lambda$ such that
\begin{equation}\label{estim-lem3}
\int_0^t\|u_\Lambda(s)\|_V^2ds\leq
c_2\Big[ \int_0^t\|f(s)\|_{V'}^2ds+\|u_0\|^2\Big],\end{equation}  for a.e   $t\in [0,T].$
\end{lemma}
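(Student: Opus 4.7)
\medskip

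The plan is to obtain the estimate by a standard energy argument, using only the coercivity of the averaged forms $\fra_k$ and the chain rule (\ref{eq:chain_rule V'}) on $\MR(V,V').$

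First I would pair the equation (\ref{problem-discrétisé}) with $u_\Lambda(t)$ in the $V'$--$V$ duality. Since $u_\Lambda \in \MR(V,V')$, the chain rule (\ref{eq:chain_rule V'}) gives
\[
\tfrac12 \tfrac{d}{dt}\|u_\Lambda(t)\|^2 + \Re \langle \A_\Lambda(t) u_\Lambda(t), u_\Lambda(t)\rangle = \Re \langle f(t), u_\Lambda(t)\rangle
\]
for a.e.\ $t\in [0,T].$ The key algebraic observation is that each averaged form $\fra_k$ in (\ref{eq:form-moyen integrale}) inherits the coercivity from $\fra$: averaging the inequality $\Re \fra(r;u,u)\geq \alpha\|u\|_V^2$ (which holds under our working assumption $\omega=0$) over $[\lambda_k,\lambda_{k+1}]$ yields $\Re\fra_k(u,u)\geq \alpha\|u\|_V^2$ for all $u\in V,$ so
\[
\Re\langle \A_\Lambda(t)u_\Lambda(t),u_\Lambda(t)\rangle \geq \alpha\|u_\Lambda(t)\|_V^2
\qquad \text{for a.e. } t\in[0,T],
\]
\emph{with the same coercivity constant $\alpha$, independently of the partition $\Lambda.$} This independence is what ultimately makes $c_2$ independent of $\Lambda.$

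Next I would bound the right-hand side using the duality estimate and Young's inequality,
\[
\Re\langle f(t),u_\Lambda(t)\rangle \leq \|f(t)\|_{V'}\|u_\Lambda(t)\|_V \leq \tfrac{1}{2\alpha}\|f(t)\|_{V'}^2 + \tfrac{\alpha}{2}\|u_\Lambda(t)\|_V^2,
\]
which absorbs half of the coercivity term on the left. Integrating over $[0,t]$ and discarding the nonnegative quantity $\tfrac12\|u_\Lambda(t)\|^2$ gives
\[
\tfrac{\alpha}{2}\int_0^t \|u_\Lambda(s)\|_V^2\,ds \leq \tfrac{1}{2\alpha}\int_0^t\|f(s)\|_{V'}^2\,ds + \tfrac12\|u_0\|^2,
\]
so the claim holds with $c_2 := \max\{\alpha^{-2},\alpha^{-1}\},$ manifestly independent of $f$, $u_0$ and $\Lambda.$

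There is essentially no obstacle: the only point that deserves a line is the preservation of coercivity under averaging, which is immediate from the definition (\ref{eq:form-moyen integrale}). The estimate is really just the standard $V$-energy estimate for Lions' problem, and the whole proof amounts to the three displays above, together with the reminder that we reduced to $\omega=0$ by Remark \ref{remark-rescaling} via the substitution $v_\Lambda(t)=e^{-\omega t}u_\Lambda(t).$
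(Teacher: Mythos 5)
Your proof is correct and follows essentially the same route as the paper: test the equation against $u_\Lambda$, apply the chain rule (\ref{eq:chain_rule V'}), use the coercivity of the averaged forms $\fra_k$ (inherited from $\fra$ with the same constant $\alpha$, hence uniformly in $\Lambda$), and close with Cauchy--Schwarz and Young's inequality. The only cosmetic difference is that you apply Young's inequality pointwise before integrating while the paper integrates first; your explicit remark that averaging preserves the coercivity constant is a welcome clarification of a point the paper leaves implicit.
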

\begin{proof} Since $u_\Lambda\in MR(V,V'),$ it follows  from (\ref{eq:chain_rule V'})
\begin{align*}\frac{d}{dt}\|u_\Lambda(t)\|^2&=2\Re \langle\dot{u}_\Lambda(t),u_\Lambda(t)\rangle
\\&=2 \Re \langle f(t)-\mathcal{A}_\Lambda(t)u_\Lambda(t),u_\Lambda(t)\rangle
\\&=-2\Re a_\Lambda(t,u_\Lambda(t),u_\Lambda(t))+2 \Re \langle f(t),u_\Lambda(t)\rangle
\end{align*}
for almost every $t\in [0,T].$
Integrating this equality  on $(0,t),$ by coercivity   of the form $\fra$ and the Cauchy-Schwartz inequality  we obtain
\[\|u_\Lambda(t)\|^2+2\alpha\int_0^t\norm{u_\Lambda(s)}_V^2ds\leq 2\int_0^t
\norm{f(s)}_{V'}\norm{u_\Lambda(s)}_Vds+  \|u_0\|^2.\]
 Inequality (\ref{estim-lem3}) follows from this estimate and the standard inequality \[ab\leq \frac{1}{2}(\frac {a^2}{\varepsilon}+\varepsilon b^2) \ \ (\varepsilon>0,\ a,b\in \mathbb{{R}}).\]

\end{proof}
Let $|\Lambda|:=\displaystyle\max_{j=0,1,...,n}(\lambda_{j+1}-\lambda_{j})$ denote the mesh of the subdivision $\Lambda$ of $[0,T].$  The main result of this section is the following
{\theorem\label{alternative-proof} Let $f\in L^2(0,T;V')$ and $u_0\in H.$ Then the  solution $u_\Lambda$
 of (\ref{problem-discrétisé})
  converges weakly in $MR(V,V')$ as $|\Lambda|
  \longrightarrow 0$ and $u:=\lim\limits_{|\Lambda|\to 0}u_\Lambda$
is the unique solution of (\ref{nCP in V'}). }
\begin{proof}
To prove that $\lim u_{\Lambda}$ exists as $|\Lambda|\longrightarrow 0,$ it suffices, by the compactness of bounded sets of $L^2(0,T,V),$ to show that it exists $u\in MR(V,V')$ such that every convergent subsequence of $u_{\Lambda}$ converges to $u$. We then begin with the uniqueness. \textit{Uniqueness:} Let $u\in \MR(V,V')$ be a solution of (\ref{nCP in V'}) with $f=0$ and $u(0)=0.$ Then
\begin{align*}\frac{d}{dt}\|u(t)\|^2&=2\textrm{Re
}\langle\dot{u}(t),u(t)\rangle
\\&=-2 \Re \langle\mathcal{A}(t)u(t),u(t)\rangle
\\&=-2\Re a(t,u(t),u(t)).
\end{align*}
Hence \[\frac{d}{dt}\|u(t)\|^2\leq -2\alpha\|u(t)\|_V^2\] and since
$u(0)=0,$ it follows that  $u(t)=0$ for a.e. $t\in[0,T].$
\par\noindent\textit{Existence:} Let $u_0\in H$ and $f\in L^2(0,T;V').$  Let $u_\Lambda\in \MR(V,V')$ be the solution  of
(\ref{problem-discrétisé}). Since $u_\Lambda$ is bounded in $L^2(0,T;V)$  by Lemma \ref{lem3}, we can assume
(after passing to a subsequence) that $u_\Lambda\rightharpoonup u$ in  $L^2(0,T;V)$  as $\mid\Lambda\mid$ goes to $0$. Let now $g\in L^2(0,T;V).$ We have
 $\mathcal{A}_\Lambda^* g\longrightarrow
\mathcal{A}^* g$  in $L^2(0,T;V')$ \cite[Lemma 2.3 and Lemma 3.1]{ELLA13}. Since
\[\int_0^T\langle \mathcal{A}_\Lambda(s)u_\Lambda(s),g(s)\rangle ds=
\int_0^T\langle u_\Lambda(s),\mathcal{A}^*_\Lambda(s)g(s)\rangle ds,\] it follows that \[\int_0^T\langle \mathcal{A}_\Lambda(s)u_\Lambda(s),g(s)\rangle ds \rightarrow \int_0^T\langle \mathcal{A}(s)u(s),g(s)\rangle ds\]
 or, in other words, $\mathcal{A}_\Lambda
u_\Lambda\rightharpoonup\mathcal{A}u$ in $L^2(0,T;V')$  and so  $\dot u_\Lambda$
converges weakly  in $L^2(0,T;V')$ by (\ref{problem-discrétisé}).
\par\noindent Thus, letting $\mid\Lambda\mid\to 0$ in (\ref{problem-discrétisé}) shows that
\begin{equation*}
  \dot{u}(t)+\mathcal A(t)u(t)=f(t) \ \ \  t{\rm
-a.e.}  \hbox{ on} \ [0,T],\
 \end{equation*}
 Since $\MR(V,V')\hookrightarrow C([0,T];H),$ we have also that $u_\Lambda\rightharpoonup u$ in $C([0,T];H)$ and in particular  $u_\Lambda(0)\rightharpoonup u(0)$ in $H,$ so that
$u$ satisfies (\ref{nCP in V'}). This completes the proof.
\end{proof}
\section{Invariance of convex sets}\label{s3}
We use the same notations as in the previous sections. We consider a  non-autonomous  form $\fra: [0,T]\times V\times V\rightarrow \C.$
Let $\A(t)\in \L(V,V')$ be the associate operator. In this section we give a other  proof of a known  invariance criterion for  the  non-autonomous homogeneous Cauchy-problem
\begin{equation} \label{nCP in V' homog}
   \dot{u}(t)+\A(t)u(t)=0\ \ t \hbox{-a.e. on}\  [0,T],\ \ \
  \ u(0)=u_0.
\end{equation}

Let $\mathcal C$ be a closed convex subset of the Hilbert space $H$ and let $P: H\rightarrow \mathcal{C}$ be the orthogonal projection onto $\mathcal C;$ i.e. for $x\in H,$ $Px$ is the unique element $x_\mathcal{C}$ in $\mathcal{C}$ such that
\[ \Re(x-x_\mathcal{C}\mid y-x_\mathcal{C})\leq 0 \ \ \hbox{ for all } y\in \mathcal{C}.\]
Recall, that  the closed convex set $\mathcal C$ is invariant for the Cauchy problem $(\ref{nCP in V' homog})$ (in the sense of \cite[Definition 2.1]{ADO12}) if for each $u_0\in \mathcal C$ the solution $u$ of $(\ref{nCP in V' homog})$ satisfies $u(t)\in \mathcal C$ for all $t\in[0,T].$
Recently, Arendt et al. \cite{ADO12} proved that the $\mathcal C$ is invariant for the inhomogenous Cauchy problem $(\ref{nCP in V'})$ provided that $PV\subset V$ and
\[ \Re \fra(t,Pv,v-Pv)\geq \Re \langle f(t),v-Pv\rangle\]
for all $v\in V$ and  for a.e  $t\in[0,T].$
\\\par As consequence of our approach, we obtain easily Theorem 2.2 in \cite{ADO12} for the homogeneous Cauchy problem from  Theorem \ref{alternative-proof}.
\begin{theorem}\label{invariance-charac-f=0}
Let $\fra$ be a non-autonomous form  on $V.$ Let $\mathcal C$ be a closed convex subset of $H.$ Then the convex set $\mathcal C$ is invariant for the Cauchy problem $(\ref{nCP in V' homog})$ provided that $PV\subset V$ and $\Re \fra(t,Pv,v-Pv)\geq 0$ for all $v\in V$ and a.e. $t\in[0,T].$
\end{theorem}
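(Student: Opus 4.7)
My plan is to reduce to the autonomous Ouhabaz invariance criterion via the frozen-coefficient approximation of Theorem \ref{alternative-proof}, and then pass to the weak limit using that closed convex subsets of $H$ are weakly closed.

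First I would check that the invariance condition is preserved under the time-averaging that defines $\fra_\Lambda$. Fix $v\in V$; by hypothesis the real-valued function $r\mapsto \Re\fra(r,Pv,v-Pv)$ is non-negative almost everywhere on $[0,T]$. Hence, for every subdivision $\Lambda$ and every $k\in\{0,\ldots,n\}$, the averaged form $\fra_k$ of (\ref{eq:form-moyen integrale}) satisfies
\[
    \Re\fra_k(Pv,v-Pv) = \frac{1}{\lambda_{k+1}-\lambda_k}\int_{\lambda_k}^{\lambda_{k+1}}\Re\fra(r,Pv,v-Pv)\,\d r \ge 0,
\]
while $PV\subset V$ is trivially inherited. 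Each $\fra_k$ is continuous and $H$-elliptic with the same constants $M,\alpha,\omega$ as $\fra$, so the autonomous invariance theorem of Ouhabaz \cite{Ouh96} applies: the semigroup $\T_k$ generated by $-\A_k$ leaves $\mathcal C$ invariant (on $H$). With $f=0$, formula (\ref{prom-sol-no-homogen}) reduces to $u_\Lambda(t)=\P_\Lambda(0,t)u_0$; an immediate induction on $k$, based on the invariance of each $\T_k$ restricted to $H$ and the product structure (\ref{promenade1})--(\ref{promenade2}), gives $u_\Lambda(t)\in\mathcal C$ for every $t\in[0,T]$ and every subdivision $\Lambda$.

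To conclude, I would invoke Theorem \ref{alternative-proof}: $u_\Lambda\rightharpoonup u$ in $\MR(V,V')$ as $|\Lambda|\to 0$, where $u$ is the unique solution of (\ref{nCP in V' homog}). Because evaluation at $t$ is a continuous linear map from $\MR(V,V')$ into $H$ (via the embedding $\MR(V,V')\hookrightarrow C([0,T];H)$), this yields $u_\Lambda(t)\rightharpoonup u(t)$ in $H$ for each fixed $t$. Since $\mathcal C$ is convex and norm-closed in $H$, Mazur's theorem makes it weakly closed, hence $u(t)\in\mathcal C$, which is the desired conclusion.

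The only step that is not entirely formal is the preservation of the invariance inequality under the averaging that produces $\fra_k$; for the present hypothesis this is immediate from linearity of the integral and positivity of the integrand, so no real obstacle arises. The rest is a compatibility check between the autonomous criterion and the product-integral structure that builds $u_\Lambda$, together with Mazur's theorem to handle the passage to the weak limit.
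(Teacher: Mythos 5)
Your proposal is correct and follows essentially the same route as the paper: positivity of $\Re\fra(\cdot,Pv,v-Pv)$ is inherited by the averaged forms $\fra_k$, the autonomous Ouhabaz criterion gives $u_\Lambda(t)\in\mathcal C$ via the product structure of $\P_\Lambda$, and the weak convergence $u_\Lambda\rightharpoonup u$ from Theorem \ref{alternative-proof} combined with the weak closedness of the norm-closed convex set $\mathcal C$ (Mazur) yields $u(t)\in\mathcal C$. If anything, your version is slightly cleaner, since you only prove the one implication actually needed (averaging preserves the inequality) rather than the equivalence stated in the paper's proof.
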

 \begin{proof}
  Let $u_0\in \mathcal C$ and let $u_\Lambda\in \MR(V,V')$ be the solution of $(\ref{nCP in V' homog}).$  The function $u_\Lambda$ is given explicitly  by (\ref{promenade1})-(\ref{promenade2}). From Theorem 2.1 in \cite{Ouh96} (or Theorem 2.2 in \cite{Ouh05}), it follows easily that $u_\Lambda(t)\in \mathcal C$ if and only if $PV\subset V$
  and  \[\Re \fra_k(Pv,v-Pv)\geq 0  \hbox{ for all  } v\in V \hbox{ and } k=0,1,...,n.\]
 Recall that $\fra_k$ is given  by (\ref{eq:form-moyen integrale}). The inequality above holds if and only if $\Re \fra(t,Pv,v-Pv)\geq 0$ for a.e. $t\in[0,T].$  Let now $u$ be the solution of $(\ref{nCP in V' homog}).$ By Theorem \ref{alternative-proof} we have $u_\Lambda\rightharpoonup u$ in
 $\MR(V,V')\underset{d}{\hookrightarrow} C([0,\tau],H).$ The claim follows from the fact that the weak closure of the  convex set $\mathcal C$  is equal to its norm closure.
 \end{proof}
 \begin{theorem}
 Assume that the non-autonomous form $\fra$ is  symmetric and  accretive.  The convex set $\mathcal C$ is invariant for the homogeneous Cauchy problem $(\ref{nCP in V' homog})$ provided that $PV\subset V$  and $\fra(t,Pv,Pv)\leq \fra(t,v,v)$  \hbox{ for a.e.}  $t\in[0,T].$
 \end{theorem}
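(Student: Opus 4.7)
The plan is to reduce this theorem to Theorem \ref{invariance-charac-f=0} by showing that, under the symmetry hypothesis, the condition $\fra(t,Pv,Pv) \le \fra(t,v,v)$ implies the pointwise coerciveness condition $\Re \fra(t,Pv,v-Pv) \ge 0$ for all $v \in V$ and a.e.\ $t\in[0,T]$, after which the conclusion follows verbatim from Theorem \ref{invariance-charac-f=0}.

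To establish this implication I would fix a $t$ outside the exceptional null set, fix $v \in V$, and introduce the one-parameter family $w_s := Pv + s(v-Pv) = (1-s)Pv + s v$ for $s\in(0,1]$. Since $PV \subset V$ and $v \in V$, we have $w_s \in V$. The essential preliminary observation is that $Pw_s = Pv$: this follows from the variational characterization of the projection, since $w_s - Pv = s(v-Pv)$ gives $\Re(w_s - Pv \mid y-Pv) = s\,\Re(v-Pv \mid y-Pv) \le 0$ for every $y \in \mathcal C$, with the last inequality coming from the fact that $Pv$ is the projection of $v$.

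Next I would apply the standing hypothesis to $w_s \in V$ to obtain $\fra(t,Pv,Pv) = \fra(t,Pw_s,Pw_s) \le \fra(t,w_s,w_s)$. Expanding $\fra(t,w_s,w_s)$ via sesquilinearity and the symmetry of $\fra(t,\cdot,\cdot)$ produces a real polynomial in $s$ whose constant term is $\fra(t,Pv,Pv)$, whose linear coefficient is $2\Re \fra(t,Pv,v-Pv)$, and whose quadratic coefficient is $\fra(t,v-Pv,v-Pv)$. Subtracting the constant term, dividing by $s>0$, and letting $s \to 0^+$ yields the desired inequality $\Re \fra(t,Pv,v-Pv) \ge 0$.

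I do not expect a genuine obstacle here: the argument is a standard variational linearization that extracts first-order information from a quadratic hypothesis by evaluating along the segment joining $Pv$ to $v$. The accretivity hypothesis appears superfluous for this particular deduction, since the quadratic coefficient vanishes in the $s \to 0$ limit irrespective of its sign; it is presumably kept as a structural standing assumption consistent with the coercivity framework of the rest of the paper.
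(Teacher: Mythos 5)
Your proof is correct, but it takes a genuinely different route from the paper's. You reduce the statement to Theorem~\ref{invariance-charac-f=0} by a purely algebraic, pointwise-in-$t$ argument: the identity $P\bigl(Pv+s(v-Pv)\bigr)=Pv$ follows exactly as you say from the variational characterization of $P$, the expansion of $\fra(t,w_s,w_s)$ uses symmetry to make the cross terms equal to $2s\Re\fra(t,Pv,v-Pv)$ and the quadratic coefficient real, and dividing by $s$ and letting $s\to 0^+$ gives $\Re\fra(t,Pv,v-Pv)\ge 0$, so Theorem~\ref{invariance-charac-f=0} applies (the only caveat, shared with the paper's own formulations, is whether the null set in ``for a.e.\ $t$'' can be taken independent of $v$). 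The paper instead re-runs the approximation scheme of the preceding proof: it applies the autonomous invariance criterion for symmetric forms (Theorem 2.2 in \cite{Ouh05}), phrased directly as $\fra_k(Pv,Pv)\le \fra_k(v,v)$ for the averaged forms $\fra_k$ of (\ref{eq:form-moyen integrale}) --- an inequality inherited from the hypothesis by integration --- and then passes to the limit $|\Lambda|\to 0$ using the weak convergence from Theorem~\ref{alternative-proof} and the weak closedness of $\mathcal C$. Your reduction is arguably cleaner, since it reuses Theorem~\ref{invariance-charac-f=0} as a black box rather than repeating the discretization argument with a second autonomous criterion; the paper's version keeps the quadratic-form criterion visible at the level of the frozen problems. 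Your closing remark is also accurate: accretivity is not needed for the implication you prove, and it enters only in the converse direction, where $\fra(t,v,v)-\fra(t,Pv,Pv)=2\Re\fra(t,Pv,v-Pv)+\fra(t,v-Pv,v-Pv)$ requires $\fra(t,v-Pv,v-Pv)\ge 0$; this is what makes the two conditions equivalent for symmetric accretive forms and explains the hypothesis in the statement.
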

 \begin{proof}
 Let $u_\Lambda\in \MR(V,V')$ be the solution of $(\ref{nCP in V' homog}).$ By  Theorem 2.2 in \cite{Ouh05}, we have  $u_\Lambda(t)\in \mathcal C$ if and only if $PV\subset V$
  and  \[\fra_k(Pv,Pv)\geq \fra_k(v,v)  \hbox{ for all  } v\in V \hbox{ and } k=0,1,...,n.\]
 This inequality holds  if and only if   $\Re \fra(t,Pv,Pv)\geq \fra(t,v,v)$ for a.e. $t\in[0,T]$ and for all $v\in V.$  The claim follows from the fact that t $u_\Lambda$ converge weakly in $C([0,\tau],H)$ to the solution  of $(\ref{nCP in V' homog}).$ \end{proof}
 %%%%%%%%%%%%%%%%%%%%%%%%%%%%%%%%%%%%%%%%%%%%%%%%%%%%%%%%%%%%%%%%%%%%%%%%%%%%%%%%%%%%%%%%%%%%%%%%%%%%%%%%%%%%%%%555

\section{Well-posedness in $H$}\label{s4}

Recall that  $V,H$ denote two separable Hilbert spaces and $\fra:[0,T]\times V\times V\to \C$ is a non-autonomous  form  introduced in the previous section. We adopt here the notations of  Sections \ref{s2}. We consider the Hilbert space
\[ MR(V,H):= L^2(0,T;V)\cap H^1(0,T; H)\] with norm
\[ \|u\|_{MR(V,H)}^2:=\|u\|^2_{L^2(0,T;V)}+\|u\|^2_{H^1(0,T; H)}.\]
\par\noindent   Let $\Lambda$ be a subdivision of $[0,T]$ and  let $f\in L^2(0,T;H)$ and $u_0\in V.$ The  solution $u_\Lambda$ of  (\ref{problem-discrétisé})
 belongs to  $\MR(V,H) $ and $u_\Lambda\in C([0,T],V).$ In fact, let  $\A_k$ be given  by (\ref{eq:op-moyen integrale}) and let
 $A_k$ be  the part of $\A_k$ in $H.$ Then it is not difficult  to see that
 \begin{equation}\label{eq:par morceaux}u_{\Lambda|_{{[\lambda_k,\lambda_{k+1}[}}}\in MR(\lambda_k,\lambda_{k+1}; D(A_k),H),\ \ \ k=0,1,2,...,n. \end{equation}
 Note, that on each interval $[\lambda_k,\lambda_{k+1}[$ the solution $u_{\Lambda}$ coincides with the solution of the autonomous Cauchy problem
 \begin{equation*}
  \dot{u}_k(t)+ A_ku_k(t)=f(t) \ \  t{\rm
-a.e.} \quad\text on \ (\lambda_k,\lambda_{k+1}),\
 \ \ u_k(\lambda_k)=u_{k-1}(\lambda_k)\in V \
\end{equation*}
which belongs to $MR(\lambda_k,\lambda_{k+1}; D(A_k),H),$ see Section \ref{s1}.
\par\noindent We assume in addition that $\fra$ is \emph{symmetric}; i.e.,
\begin{equation}\label{symmetric}
    \fra(t,u,v) = \overline{\fra(t,v,u)} \quad (t \in [0,T], u,v \in V),
\end{equation}
and \emph{Lipschitz continuous} i.e., there exists a positive constant $L$ such that
\begin{equation}\label{Lipschitz-continuous-form}
    \abs{\fra(t,u,v)- \fra(s,u,v)} \le L \abs{t-s} \norm u_V \norm v_V \quad (t,s \in [0,T], u,v \in V)
\end{equation}
For simplicity, we assume in the following that the subdivision $\Lambda$ of $[0,T]$ is uniform, i.e., $\lambda_{i+1}-\lambda_i=\lambda_{j+1}-\lambda_j$ for all $(i,j)\in\{0,1,2,...,n\}^2.$
\par Theorem \ref{(nCP) in H} below, shows that the solution $u_\Lambda$ of (\ref{problem-discrétisé}) converges weakly in $MR(V,H)$  and so the limit $u,$ which is the solution of (\ref{nCP in V'}), belongs to the maximal regularity space
$ MR(V,H).$ This gives an other proof of
 Theorem 5.1 in \cite{ADLO13} with $\fra$ symmetric and $B=Id.$
{\theorem\label{(nCP) in H}    Assume that  $\fra$ is symmetric and Lipschitz continuous. Let $(f,u_0)\in L^2(0,T; H)\times V.$ Then $u_\Lambda,$ the solution of (\ref{problem-discrétisé}),  converges weakly in $MR(V,H)$ as $|\Lambda|\longrightarrow 0$ and $u:=\lim\limits_{|\Lambda|\to 0}u_\Lambda$
is the unique solution of (\ref{nCP in V'}). Moreover \begin{equation}\label{eq:MR_estimate}
    	\norm u_{\MR(V,H)} \le c \Big[ \norm{u_0}_V + \norm f_{L^2(0,T;H)} \Big],
    \end{equation}
    where the constant $c$ depends merely on $ \alpha,c_H, M$ and $L$.  }
\begin{proof} Let $(f,u_0)\in L^2(0,T; H)\times V.$ Let $u_\Lambda\in MR(V,H)$ be the solution of (\ref{problem-discrétisé}).  According to the proof of Theorem \ref{alternative-proof}, it remains to prove that  $u_\Lambda$ is bounded in $MR(V,H).$ We estimate  first the derivative $\dot u_\Lambda.$ Using  (\ref{rule-formula}) and (\ref{eq:par morceaux}) we obtain
\begin{align*}
\int_0^T\|\dot u_\Lambda(t)\|^2dt&=\int_{0}^{T}\Re(-\A_\Lambda(t)u_\Lambda(t)\mid\dot u_\Lambda(t))dt+
\int_0^T\Re(f(t)\mid\dot u_\Lambda(t)) dt
\\&= \sum_{k=0}^{n-1}\int_{\lambda_{k}}^{\lambda_{k+1}}\Re(-\A_\Lambda(t)u_\Lambda (t)\mid\dot u_\Lambda(t))dt+
\int_0^T\Re(f(t)\mid\dot u_\Lambda(t)) dt
\\&= \sum_{k=0}^{n-1}\int_{\lambda_{k}}^{\lambda_{k+1}}\Re(-\A_ku_\Lambda (t)\mid\dot u_\Lambda(t))dt+
\int_0^T\Re(f(t)\mid\dot u_\Lambda(t)) dt
\\&=  -\sum_{k=0}^{n-1}\int_{\lambda_{k}}^{\lambda_{k+1}}\frac{1}{2}\frac{d}{dt}\fra_k(u_\Lambda(t))dt +
\int_0^T\Re(f(t)\mid\dot u_\Lambda(t)) dt
\end{align*}
 \par\noindent For the first term on the right-hand side of the above equality
 \begin{align*}
 -\sum_{k=0}^{n-1}&\int_{\lambda_{k}}^{\lambda_{k+1}}\frac{d}{dt}\fra_k(u_\Lambda(t))dt
=  -\sum_{k=0}^{n-1}\Big(\fra_k(u_\Lambda(\lambda_{k+1}))-\fra_k(u_\Lambda(\lambda_{k}))\Big)
\\&=-\left(\sum_{k=0}^{n-1}\fra_k(u_\Lambda(\lambda_{k+1}))-\sum_{k=-1}^{n-2}\fra_{k+1}(u_\Lambda(\lambda_{k+1}))\right)
\\&=-\sum_{k=0}^{n-2}\Big(\fra_k(u_\Lambda(\lambda_{k+1}))-\fra_{k+1}(u_\Lambda(\lambda_{k+1}))\Big) -\fra_{n-1}(u_\Lambda(\lambda_{n}))+\fra_{0}(u_\Lambda({0}))
\\&\leq -\sum_{k=0}^{n-2}\Big(\fra_k(u_\Lambda(\lambda_{k+1}))-\fra_{k+1}(u_\Lambda(\lambda_{k+1}))\Big)+M\|u_\Lambda({0})\|_V^2
\end{align*}
Now, using integration by substitution and Lipschitz continuity of  $\fra$ we obtain
\begin{align}\label{b_k} |\fra_k(u_\Lambda(\lambda_{k+1}))-\fra_{k+1}(u_\Lambda(\lambda_{k+1}))|
&\leq L(\lambda_{k+1}-\lambda_{k})\|u_\Lambda(\lambda_{k+1}))\|^2_V
\\\nonumber&\qquad \  \ \ \  \  \ \ \  \hbox{ for every } k=0,1,...,n-2
\end{align}
Let $k=0,1,2,...,n-2$ and $t_k\in [\lambda_k,\lambda_{k+1}[$ be arbitrary. Then $u_{\Lambda|_{{[t_k,\lambda_{k+1}[}}}$ belongs to
$MR(t_k,\lambda_{k+1}; D(A_k),H)$ and
\begin{equation}\label{estimate+integral+riemann}
    \|u_\Lambda(\lambda_{k+1})\|^2_{V}\leq c\Big[\|u_\Lambda(t_k)\|_V^2+\|f\|^2_{L^2(t_k,\lambda_{k+1};H)}\Big]\end{equation}
where the constant $c$ depends only on $M,\omega,  \alpha,c_H$ and $T$ (see Lemma \ref{indepmax}).
Inserting (\ref{estimate+integral+riemann}) into (\ref{b_k}) we obtain then for every $k=0,1,...,n-2$
\begin{align*} |\fra_k(u_\Lambda(\lambda_{k+1}))&-\fra_{k+1}(u_\Lambda(\lambda_{k+1}))|
\\&\leq c(\lambda_{k+1}-\lambda_{k})\|u_\Lambda(t_k)\|_V^2
+c(\lambda_{k+1}-\lambda_{k})\|f\|^2_{L^2(0,T;H)}
\\&\leq c\int_{\lambda_{k}}^{\lambda_{k+1}}\|u_\Lambda(s)\|_V^2ds
+c(\lambda_{k+1}-\lambda_{k})\|f\|^2_{L^2(0,T;H)},
\end{align*}
For the last inequality, $t_k$ is chosen such that $$(\lambda_{k+1}-\lambda_k)\|u_{\Lambda}(t_k)\|_{V}^2=\int_{\lambda_k}^{\lambda_{k+1}}\|u_{\Lambda}(s)\|_{V}^2 ds$$
using the mean value theorem and the fact that $t_k\in [\lambda_k,\lambda_{k+1}[$ is arbitrary.
Thus
    \begin{equation}
    \sum_{k=0}^{n-2}|\fra_k(u_\Lambda(\lambda_{k+1}))-a_{k+1}(u_\Lambda(\lambda_{k+1}))|
\leq c\Big[\|u_\Lambda\|_{L^2(0,T;V)}^2+\|f\|^2_{L^2(0,T;H)}\Big]
 \end{equation}
 for some ${c}={c}( M,\omega,  \alpha,c_H, T,L)$  (possibly different from the previous one).
 It follows
 \begin{align*}
\int_0^T\|\dot u_\Lambda(t)\|^2dt\leq {c} \Big[\|u_\Lambda\|_{L^2(0,T;V)}^2&+\|f\|^2_{L^2(0,T;H)}\Big]
\\&+\int_0^T\Re(f(t)\mid\dot u_\Lambda(t)) dt +M\|u_0\|_V^2
\end{align*}
Finally, from this inequality, the estimate (\ref{estim-lem3}) in Lemma \ref{lem3}, the Cauchy-Schwarz and the Young's inequality applied the third term on right-hand side, it follows that there is a constant $c=c(M,\omega,  \alpha,c_H, T,L)$  such that
\begin{equation*}
\int_0^T\|\dot u_\Lambda(t)\|^2dt+\int_0^T\|u_\Lambda(t)\|_V^2dt\leq c \Big(\|u_0\|_V^2+\|f\|^2_{L^2(0,T;H)}\Big)
\end{equation*}
This completes the proof.
\end{proof}
%%%%%%%%%%%%%%%%%%%%%%%%%%%%%%%%%%%%%%%%%%%%%%%%%%%%%%%%%%%%%%%%%%%%%%%%%%%%%%%%%%%%%%
%\printbibliography

\emph{Hafida Laasri}, Fachbereich C - Mathematik und Naturwissenschaften, University of Wuppertal, Gaußstraße 20,
42097 Wuppertal, Germany,\\
\texttt{laasri@uni-wuppertal.de}

\quad\\
\noindent
\emph{Ahmed Sani},
Department of Mathematics, University Ibn Zohr, Faculty of Sciences, Agadir, Morocco,
\\ \texttt{ahmedsani82@gmail.com}.
\end{document}